\documentclass[11pt,a4paper]{article}
\usepackage[margin=28mm]{geometry}
\usepackage{amsmath,amssymb,amsthm,mathtools,booktabs,array,microtype,xcolor,hyperref,fancyhdr}
\hypersetup{colorlinks=true,linkcolor=blue!45!black,citecolor=blue!45!black,urlcolor=blue!45!black}
\newtheorem{definition}{Definition}[section]\newtheorem{lemma}[definition]{Lemma}\newtheorem{proposition}[definition]{Proposition}\newtheorem{theorem}[definition]{Theorem}\newtheorem{corollary}[definition]{Corollary}\newtheorem{conjecture}[definition]{Conjecture}\theoremstyle{remark}\newtheorem{remark}[definition]{Remark}\newtheorem{example}[definition]{Example}
\newcommand{\Part}{\Pi}\newcommand{\kk}{\mathbf{k}}\newcommand{\cN}{\mathcal N}\newcommand{\Hilb}{\operatorname{Hilb}}\newcommand{\rhoj}{\rho}
\title{\textbf{A Categorical Framework for the Negami Polynomial and Its Splitting Formula}\\[2mm]\large State-Sum Lifts, Boundary Partitions, and the Complete Two-Vertex Calculation}
\author{\textbf{IWAO MIZUKAI}\\\small Chiba Keizai University High School\\\small Chiba, Japan\\\small \href{mailto:get_mizukai@hotmail.com}{get\_mizukai@hotmail.com}}
\date{}
\begin{document}\maketitle
\begin{abstract}
We give an elementary categorical lift of the three-variable Negami polynomial $f(G;t,x,y)$ and of the boundary-state structure underlying Negami's splitting formula. To a finite graph we associate a triply graded vector space whose Hilbert polynomial is exactly $f(G;t,x,y)$; deletion--contraction becomes a direct-sum decomposition. For a graph with a distinguished boundary vertex set $U$, we refine the construction by the partition of $U$ induced by each spanning state. We prove an object-level gluing formula and show that its decategorification produces the matrix $T_n(t)=(t^{\rho(\pi,\sigma)})$. For $n=2$ we derive $T_2(t)$ and its inverse $B_2(t)$ explicitly and verify the splitting formula on a four-cycle. Finally, we isolate the obstruction to realizing $B_n(t)$ inside finite-dimensional graded vector spaces and formulate a derived/localized version of the problem. The results proved here constitute an additive categorical framework; construction of a genuinely new Negami homology with a nontrivial differential remains open.
\end{abstract}
\noindent\textbf{Keywords.} Negami polynomial; Tutte polynomial; categorification; splitting formula; partition lattice; deletion--contraction.\\
\textbf{MSC 2020.} 05C31, 05C15, 18N25.

\section{Introduction}
Negami introduced in \cite{Negami1987} a three-variable graph polynomial $f(G;t,x,y)$ characterized by deletion--contraction and the value $t^n$ on $n$ isolated vertices. One of its most distinctive features is a splitting formula for $G=K\cup H$ when the two subgraphs meet in a finite vertex set. The formula is controlled by the partition lattice of the common vertex set and by a matrix $T_n(t)$ whose inverse gives universal splitting coefficients. Oxley established the close relation with the Tutte polynomial \cite{Oxley1989}; singular values and generalized inverses of the splitting matrix were later studied by Burgos \cite{Burgos2018}.

Categorification replaces a polynomial invariant by a graded or homological object whose graded dimension or Euler characteristic recovers the polynomial. For graph polynomials, Jasso-Hernandez and Rong constructed a bigraded chain complex whose graded Euler characteristic is a version of the Tutte polynomial and obtained exact sequences reflecting deletion--contraction \cite{JassoRong2006}.

Our aim is structural. We first lift the full three-variable Negami state sum to a triply graded vector space. We then refine this lift for graphs with boundary and prove that the boundary gluing mechanism already exists at the level of graded objects. Thus $T_n(t)$ appears as the decategorification of a categorical pairing matrix. The inverse $B_n(t)=T_n(t)^{-1}$ contains signs and denominators, identifying where a stronger homological theory must go beyond finite-dimensional graded vector spaces. No claim is made that the additive lift alone is a new homology theory.

\section{The Negami polynomial and its state sum}
Throughout, graphs are finite and may have loops and multiple edges. For $S\subseteq E(G)$, let $\omega_G(S)$ be the number of connected components of the spanning subgraph $(V(G),S)$.

\begin{definition}[Negami polynomial]
The Negami polynomial is determined by
\[f(\overline K_n;t,x,y)=t^n,\qquad f(G;t,x,y)=xf(G/e;t,x,y)+yf(G-e;t,x,y),\]
where contraction of a loop is identified with deletion, following \cite{Negami1987}.
\end{definition}

\begin{theorem}[State-sum formula]\label{thm:statesum}
For every finite graph $G$,
\begin{equation}\label{eq:statesum}
\boxed{f(G;t,x,y)=\sum_{S\subseteq E(G)}t^{\omega_G(S)}x^{|S|}y^{|E(G)|-|S|}.}
\end{equation}
\end{theorem}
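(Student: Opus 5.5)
We argue by induction on $|E(G)|$. Write $F(G)$ for the right-hand side of \eqref{eq:statesum}. It suffices to show that $F$ satisfies the two defining properties of the Negami polynomial, namely the value on edgeless graphs and the deletion--contraction relation. The definition tacitly asserts that these properties determine $f$ uniquely, and induction on the number of edges then forces $f=F$. In fact, checking the two properties for $F$ also shows that the recursion is consistent, that is, independent of the order in which edges are processed. This is the one conceptual point hidden in the definition, and the state sum settles it.

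The base case is immediate. If $E(G)=\emptyset$, then $G=\overline K_n$ and the only state is $S=\emptyset$. This state has $\omega=n$, so $F(G)=t^n$.

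For the inductive step, fix an edge $e$ and split the sum according to whether $e\in S$.

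\emph{States with $e\notin S$.} These are exactly the subsets $S\subseteq E(G-e)$. Since $V(G-e)=V(G)$, we have $\omega_G(S)=\omega_{G-e}(S)$. The exponent of $y$ is $|E(G)|-|S|=(|E(G-e)|-|S|)+1$. Hence this part contributes $yF(G-e)$.

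\emph{States with $e\in S$, for $e$ not a loop.} Write $S=S'\cup\{e\}$ with $S'\subseteq E(G/e)=E(G)\setminus\{e\}$. The key claim is $\omega_G(S'\cup\{e\})=\omega_{G/e}(S')$. To prove it, observe that contracting $e$ identifies its two endpoints, which already lie in the same component of $(V,S'\cup\{e\})$. Components of $(V(G),S'\cup\{e\})$ therefore correspond bijectively to components of $(V(G/e),S')$ via the quotient map on vertices. The exponent of $x$ is $|S'|+1$, and the exponent of $y$ is unchanged. Hence this part contributes $xF(G/e)$.

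\emph{States with $e\in S$, for $e$ a loop.} Adding the loop does not change connectivity, so $\omega_G(S'\cup\{e\})=\omega_{G-e}(S')$. Since the convention identifies $G/e$ with $G-e$, this part again equals $xF(G/e)$.

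Summing the two parts gives $F(G)=xF(G/e)+yF(G-e)$. Applying the induction hypothesis to $G/e$ and $G-e$, both of which have fewer edges than $G$, yields $F(G)=f(G)$.

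The only step with real content is the component-count bijection under contraction. Even that is routine once one notes that multiple edges parallel to $e$ become loops in $G/e$, which does not affect connectivity. I therefore expect no genuine obstacle; the main care needed is in the loop convention and the bookkeeping of the exponents of $x$ and $y$.
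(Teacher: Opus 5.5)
Your proposal is correct and follows essentially the same route as the paper's proof: induction on $|E(G)|$, splitting states by whether $e\in S$, so that states avoiding $e$ give $yF(G-e)$ and states containing $e$ give $xF(G/e)$ through the component-count correspondence. You add detail the paper leaves implicit: the separate loop case, the explicit bijection under contraction, and the observation that the state sum shows the recursion is independent of the order in which edges are processed.
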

\begin{proof}
Induct on $m=|E(G)|$. For $m=0$ the only state is empty and gives $t^{|V(G)|}$. For $m>0$, fix $e$. States not containing $e$ are states of $G-e$ and contribute an extra factor $y$. States containing $e$ correspond, after deleting $e$ from the state and contracting it, to states of $G/e$ with the same component count and an extra factor $x$. Hence the state sum satisfies the defining recurrence and the initial condition.
\end{proof}

\begin{corollary}[Relation with the Tutte polynomial]\label{cor:tutte}
Let $c(G)=\omega_G(E(G))$, $r(G)=|V(G)|-c(G)$ and $\nu(G)=|E(G)|-r(G)$. Then
\begin{equation}\label{eq:tutte}
\boxed{f(G;t,x,y)=t^{c(G)}x^{r(G)}y^{\nu(G)}T_G\!\left(1+\frac{ty}{x},1+\frac{x}{y}\right).}
\end{equation}
\end{corollary}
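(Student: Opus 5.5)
We will derive the identity directly from the state-sum formula of Theorem~\ref{thm:statesum}, using the standard rank--nullity (Whitney) expansion of the Tutte polynomial,
\[
T_G(X,Y)=\sum_{S\subseteq E(G)}(X-1)^{r(E)-r(S)}(Y-1)^{|S|-r(S)},\qquad r(S)=|V(G)|-\omega_G(S),
\]
which we take as the definition of $T_G$ (or as the standard equivalent form, consistent with \cite{Oxley1989}). Here $r(E)=r(G)$ as in the statement. Substituting $X-1=ty/x$ and $Y-1=x/y$, the right-hand side of \eqref{eq:tutte} becomes a sum over $S\subseteq E(G)$. We then check, term by term, that the summand equals $t^{\omega_G(S)}x^{|S|}y^{|E(G)|-|S|}$.

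The key step is bookkeeping of exponents for a fixed $S$. Write $k=\omega_G(S)$, so $r(S)=|V|-k$ and $r(E)-r(S)=k-c(G)$. The $S$-term of the right-hand side is
\[
t^{c(G)}x^{r(G)}y^{\nu(G)}\Bigl(\tfrac{ty}{x}\Bigr)^{k-c(G)}\Bigl(\tfrac{x}{y}\Bigr)^{|S|-|V|+k}.
\]
The powers of $t$ combine to $t^{k}$. The power of $x$ is
\[
r(G)-(k-c(G))+|S|-|V|+k=|V|-c(G)+c(G)+|S|-|V|=|S|.
\]
The power of $y$ is
\[
\nu(G)+(k-c(G))-(|S|-|V|+k)=|E|-|V|+c(G)+k-c(G)-|S|+|V|-k=|E|-|S|.
\]
Summing over $S$ therefore gives exactly \eqref{eq:statesum}, and Theorem~\ref{thm:statesum} finishes the proof.

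There is no real obstacle; the only points needing care are these two:
\begin{itemize}
\item[(i)] Fixing the normalization of $T_G$ via the rank generating function, so that loops and multiple edges are handled automatically. Alternatively, one could verify that both sides satisfy Negami's deletion--contraction recurrence with the bridge and loop cases treated separately, but the subset expansion avoids this case analysis.
\item[(ii)] Noting that the identity is one of rational functions in $t,x,y$, since negative powers of $x$ and $y$ appear termwise before cancellation.
\end{itemize}
Because the final exponents are the nonnegative integers $|S|$ and $|E|-|S|$, the identity is in fact polynomial.
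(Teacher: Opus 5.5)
Your proof is correct and follows the same route as the paper: substitute $X-1=ty/x$ and $Y-1=x/y$ into the rank--nullity expansion of $T_G$, then match each summand, after multiplying by the monomial prefactor, with the corresponding term of the state sum \eqref{eq:statesum}. Your explicit exponent bookkeeping for $t$, $x$ and $y$ spells out the verification that the paper leaves to the reader.
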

\begin{proof}
Insert $X-1=ty/x$ and $Y-1=x/y$ into the state-sum definition of $T_G(X,Y)$ and use $r(S)=|V(G)|-\omega_G(S)$. After multiplication by the displayed monomial prefactor, every summand becomes the corresponding summand of \eqref{eq:statesum}.
\end{proof}

\section{An additive categorification}
Let $\mathrm{GrVect}^{\mathbb Z^3}_{\kk}$ be the category of finite-dimensional $\mathbb Z^3$-graded vector spaces over a field $\kk$. Write $M\{a,b,c\}$ for grading shift and
\[\Hilb(M)=\sum_{i,j,k}\dim_{\kk}M_{i,j,k}\,t^ix^jy^k.\]

\begin{definition}[Negami state object]
Define
\begin{equation}\label{eq:N0}
\boxed{\cN_0(G)=\bigoplus_{S\subseteq E(G)}\kk\{\omega_G(S),|S|,|E(G)|-|S|\}.}
\end{equation}
\end{definition}

\begin{theorem}[Additive categorification]\label{thm:additive}
The isomorphism class of $\cN_0(G)$ is a graph invariant and
\[\boxed{\Hilb(\cN_0(G))=f(G;t,x,y).}\]
\end{theorem}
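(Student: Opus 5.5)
The statement has two parts: invariance of the isomorphism class under graph isomorphism, and the Hilbert polynomial identity. I will treat them separately. Both reduce to elementary facts about finite-dimensional graded vector spaces together with Theorem~\ref{thm:statesum}.

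\emph{Invariance.} Let $\phi\colon G\to G'$ be a graph isomorphism, i.e.\ bijections $V(G)\to V(G')$ and $E(G)\to E(G')$ preserving incidence (this allows loops and multiple edges). The edge bijection induces a bijection $S\mapsto\phi(S)$ between subsets of $E(G)$ and of $E(G')$. Under this bijection, the spanning subgraph $(V(G),S)$ is isomorphic to $(V(G'),\phi(S))$, so $\omega_G(S)=\omega_{G'}(\phi(S))$. We also have $|\phi(S)|=|S|$ and $|E(G')|=|E(G)|$. Hence the summand $\kk\{\omega_G(S),|S|,|E(G)|-|S|\}$ of \eqref{eq:N0} and the summand of $\cN_0(G')$ indexed by $\phi(S)$ carry identical tridegrees. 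Sending the basis vector of each summand to the corresponding basis vector gives a degree-preserving linear isomorphism $\cN_0(G)\cong\cN_0(G')$.

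A cleaner way to phrase this is that the isomorphism class of a finite-dimensional $\mathbb Z^3$-graded space is determined by its dimension function $(i,j,k)\mapsto\dim M_{i,j,k}$. So it suffices to check that the multiset of tridegrees $\{(\omega_G(S),|S|,|E|-|S|)\}_S$ is an isomorphism invariant, which is exactly what the bijection above shows.

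\emph{Hilbert polynomial.} $\Hilb$ is additive on finite direct sums because graded pieces of a direct sum are direct sums of graded pieces. It also satisfies $\Hilb(\kk\{a,b,c\})=t^ax^by^c$, since $\kk\{a,b,c\}$ is one-dimensional and concentrated in degree $(a,b,c)$. Summing over $S\subseteq E(G)$ then gives
\[\Hilb(\cN_0(G))=\sum_{S\subseteq E(G)}t^{\omega_G(S)}x^{|S|}y^{|E(G)|-|S|}.\]
By Theorem~\ref{thm:statesum}, the right-hand side equals $f(G;t,x,y)$.

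There is no real obstacle here. The only point requiring care is the convention for the shift $\{a,b,c\}$: it must place the one-dimensional space in degree $(a,b,c)$, not $(-a,-b,-c)$. I will fix this convention explicitly so that the monomials match the signs in \eqref{eq:statesum}.
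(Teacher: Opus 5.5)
Your proposal is correct and takes essentially the same approach as the paper: each state contributes a one-dimensional summand whose Hilbert monomial is its term in the state sum of Theorem~\ref{thm:statesum}, and a graph isomorphism induces a bijection of edge subsets that preserves both cardinality and component count. Your explicit fixing of the shift convention, so that $\kk\{a,b,c\}$ sits in degree $(a,b,c)$, is a harmless added precision rather than a different route.
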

\begin{proof}
Each state contributes a one-dimensional homogeneous summand whose Hilbert monomial is precisely its term in \eqref{eq:statesum}. A graph isomorphism induces a bijection on edge subsets preserving both cardinality and component count.
\end{proof}

\begin{theorem}[Object-level deletion--contraction]\label{thm:dc}
For every edge $e\in E(G)$,
\begin{equation}\label{eq:dc}
\boxed{\cN_0(G)\cong\cN_0(G/e)\{0,1,0\}\oplus\cN_0(G-e)\{0,0,1\}.}
\end{equation}
\end{theorem}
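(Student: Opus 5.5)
The isomorphism will be built summand by summand: I will split the indexing set $\{S\subseteq E(G)\}$ of \eqref{eq:N0} according to whether $e\in S$, and match each class with the states of $G/e$ or of $G-e$ so that the trigradings agree after the indicated shifts. Since $\cN_0$ is a direct sum of one-dimensional homogeneous lines, it suffices to give a bijection of index sets preserving trigrading. This is the same bookkeeping as in the proof of Theorem~\ref{thm:statesum}, now done at the level of graded lines.

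\emph{States with $e\notin S$.} Here $S$ is an arbitrary subset of $E(G-e)=E(G)\setminus\{e\}$. The spanning subgraphs $(V(G),S)$ and $(V(G-e),S)$ coincide, so $\omega_G(S)=\omega_{G-e}(S)$. We also have $|E(G)|-|S|=(|E(G-e)|-|S|)+1$. So the line $\kk\{\omega_G(S),|S|,|E(G)|-|S|\}$ equals the line indexed by $S$ in $\cN_0(G-e)$ shifted by $\{0,0,1\}$.

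\emph{States with $e\in S$, $e$ not a loop.} I will use the map $S\mapsto S\setminus\{e\}$, viewed as an edge set of $G/e$, whose edges are identified with $E(G)\setminus\{e\}$. This is a bijection onto subsets of $E(G/e)$. The key check is $\omega_G(S)=\omega_{G/e}(S\setminus\{e\})$. This holds because contracting an edge lying in the spanning subgraph merges its two endpoints, which are already in one component, so the component count does not change. Edges parallel to $e$ become loops in $G/e$, but they are still the same edge set and do not affect connectivity. For the other two gradings, $|S|=|S\setminus\{e\}|+1$ and $|E(G)|-|S|=|E(G/e)|-|S\setminus\{e\}|$. Hence this line is the corresponding line of $\cN_0(G/e)$ shifted by $\{0,1,0\}$.

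\emph{$e$ a loop.} This is the only delicate point and the expected main obstacle. The paper's convention is $G/e=G-e$. States containing a loop $e$ have $\omega_G(S)=\omega_{G-e}(S\setminus\{e\})$, so they again biject with states of $G-e=G/e$. The $x$-degree goes up by one and the $y$-degree is unchanged, which matches the shift $\{0,1,0\}$. So the same bijection works verbatim under this convention, and I will state explicitly that the convention is what makes \eqref{eq:dc} hold for loops. Summing the two bijections gives the isomorphism. Taking $\Hilb$ recovers the recurrence, consistent with Theorem~\ref{thm:additive}.
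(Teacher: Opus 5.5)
Your proposal is correct and follows essentially the same route as the paper. Both split the states by whether $e\in S$ and match the two classes with states of $G-e$ and $G/e$, with the indicated grading shifts. Your explicit check of the loop case under the convention $G/e=G-e$ is a detail the paper's proof leaves implicit.
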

\begin{proof}
Partition the states according to whether $e$ is selected. States not containing $e$ are states of $G-e$ with one additional unit in the unselected-edge grading. States containing $e$ correspond to states of $G/e$ with the same component count and one additional unit in the selected-edge grading. Taking the direct sum gives \eqref{eq:dc}.
\end{proof}

\begin{remark}
This is an additive, Hilbert-series categorification. Since no differential is present, it is not yet a homology theory stronger than the polynomial itself.
\end{remark}

\section{Boundary partitions and categorical gluing}
Let $K$ have distinguished boundary $U=\{u_1,\ldots,u_n\}\subseteq V(K)$.

\begin{definition}[Boundary partition]
For $S\subseteq E(K)$, let $\pi_K(S)\in\Part(U)$ be the partition in which $u_i,u_j$ are in the same block precisely when they are connected in $(V(K),S)$.
\end{definition}

\begin{definition}[Generated boundary connectivity]
For $\pi,\sigma\in\Part(U)$, let $\rhoj(\pi,\sigma)$ be the number of blocks in the smallest equivalence relation containing the relations represented by both partitions. Under the refinement order this equals $|\pi\vee\sigma|$. Negami uses the opposite order convention, where the same operation is denoted by a meet.
\end{definition}

\begin{lemma}[Component count under gluing]\label{lem:components}
Suppose $G=K\cup_UH$, where $K$ and $H$ have disjoint edge sets and intersect only in $U$. For states $S_K,S_H$, put $\pi=\pi_K(S_K)$ and $\sigma=\pi_H(S_H)$. Then
\begin{equation}\label{eq:components}
\boxed{\omega_G(S_K\cup S_H)=(\omega_K(S_K)-|\pi|)+(\omega_H(S_H)-|\sigma|)+\rhoj(\pi,\sigma).}
\end{equation}
\end{lemma}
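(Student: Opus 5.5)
We classify the connected components of the spanning subgraph $(V(G),S_K\cup S_H)$ according to whether they meet $U$. Since $K$ and $H$ have disjoint edge sets and $V(K)\cap V(H)=U$, every edge of $S_K\cup S_H$ lies entirely in $K$ or entirely in $H$. It follows that a path in $G$ can pass between $V(K)\setminus U$ and $V(H)\setminus U$ only through a vertex of $U$.

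\emph{Components avoiding $U$.} First we show that a component $C$ of $(V(G),S_K\cup S_H)$ with $C\cap U=\emptyset$ lies entirely in $V(K)\setminus U$ or entirely in $V(H)\setminus U$. By the separation remark, $C$ cannot contain vertices from both sides. Hence $C$ is a component of $(V(K),S_K)$ or of $(V(H),S_H)$ that misses $U$. Conversely, every such component of $K$ or $H$ remains a component in $G$, because no edge of the other side touches it. Each component of $(V(K),S_K)$ meeting $U$ meets it in exactly one block of $\pi$, so exactly $|\pi|$ components of $K$ meet $U$. Therefore $K$ contributes $\omega_K(S_K)-|\pi|$ components avoiding $U$, and likewise $H$ contributes $\omega_H(S_H)-|\sigma|$.

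\emph{Components meeting $U$.} Next we show that the components of $G$ meeting $U$ are in bijection with the blocks of the smallest equivalence relation on $U$ containing $\pi$ and $\sigma$, of which there are $\rho(\pi,\sigma)$. Every component meeting $U$ contains a vertex of $U$: a vertex of $K\setminus U$ lying in it is joined within $K$ to some boundary vertex, and similarly for $H$. It therefore suffices to show that two boundary vertices $u,u'$ are connected in $G$ if and only if they are related by the generated equivalence relation.

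The direction $(\Leftarrow)$ is immediate. Each generating relation from $\pi$ or $\sigma$ is realized by a path in $S_K$ or in $S_H$, and concatenating such paths connects $u$ to $u'$.

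The direction $(\Rightarrow)$ is the only step needing care. Take a path from $u$ to $u'$ in $S_K\cup S_H$ and cut it at its successive visits to $U$. Each segment between consecutive boundary vertices has all interior vertices outside $U$. By the separation remark, such a segment uses edges from only one side: an edge of $K$ followed by an edge of $H$ at an interior vertex $v$ would force $v\in V(K)\cap V(H)=U$. So each segment witnesses a relation in $\pi$ or in $\sigma$, and $u,u'$ are related by the generated equivalence relation.

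Adding the two counts gives \eqref{eq:components}. The main point to verify is the segment decomposition in the $(\Rightarrow)$ direction, which relies entirely on $E(K)\cap E(H)=\emptyset$ and $V(K)\cap V(H)=U$.
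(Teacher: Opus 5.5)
Your proof is correct and follows the same route as the paper. You split the components of $(V(G),S_K\cup S_H)$ into those avoiding $U$, which are exactly the internal components of the two states and survive gluing unchanged, and those meeting $U$, which correspond to the blocks of the equivalence relation generated by $\pi$ and $\sigma$; beyond the paper's terse proof, you supply the separation argument and the path-segmentation proof of the $(\Rightarrow)$ direction (the justification attached to ``every component meeting $U$ contains a vertex of $U$'' is unnecessary, since that claim is tautological).
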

\begin{proof}
Exactly $|\pi|$ components of the $K$-state meet $U$, so $\omega_K(S_K)-|\pi|$ are internal and survive gluing unchanged; similarly for $H$. The boundary-meeting components are identified according to the transitive closure of the two boundary equivalence relations, leaving $\rhoj(\pi,\sigma)$ components. Summing gives the formula.
\end{proof}

\begin{definition}[Boundary-state object]
For $\pi\in\Part(U)$ set
\begin{equation}\label{eq:boundaryobject}
\boxed{\cN_{U,\pi}(K)=\bigoplus_{\substack{S\subseteq E(K)\\\pi_K(S)=\pi}}\kk\{\omega_K(S)-|\pi|,|S|,|E(K)|-|S|\}.}
\end{equation}
\end{definition}

\begin{theorem}[Object-level splitting formula]\label{thm:catsplit}
Under the hypotheses above,
\begin{equation}\label{eq:catsplit}
\boxed{\cN_0(G)\cong\bigoplus_{\pi,\sigma\in\Part(U)}\cN_{U,\pi}(K)\otimes\kk\{\rhoj(\pi,\sigma),0,0\}\otimes\cN_{U,\sigma}(H).}
\end{equation}
\end{theorem}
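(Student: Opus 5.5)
The plan is to exhibit an explicit bijection of homogeneous basis vectors and check that it respects all three gradings. Both sides of \eqref{eq:catsplit} are direct sums of one-dimensional homogeneous pieces. The left side has one summand $\kk\{\omega_G(S),|S|,|E(G)|-|S|\}$ for each state $S\subseteq E(G)$. On the right, the summand indexed by $(\pi,\sigma)$ is a tensor product. Since tensor products of graded vector spaces distribute over direct sums, and $\kk\{a\}\otimes\kk\{b\}\cong\kk\{a+b\}$, this summand decomposes as a direct sum over pairs $(S_K,S_H)$ with $\pi_K(S_K)=\pi$ and $\pi_H(S_H)=\sigma$. Each such pair contributes the line
\[
\kk\{\omega_K(S_K)-|\pi|+\rhoj(\pi,\sigma)+\omega_H(S_H)-|\sigma|,\;|S_K|+|S_H|,\;|E(K)|-|S_K|+|E(H)|-|S_H|\}.
\]
After this rewriting, the right side is a direct sum over all pairs $(S_K,S_H)$. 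The reason is that each pair has exactly one boundary partition on each side, so the double sum over $(\pi,\sigma)$ simply regroups the pairs.

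Next, I use the hypothesis that $K$ and $H$ have disjoint edge sets with $E(G)=E(K)\sqcup E(H)$. This gives a bijection $S\mapsto(S\cap E(K),S\cap E(H))$ between states of $G$ and pairs of states, with inverse $(S_K,S_H)\mapsto S_K\cup S_H$. Two of the three gradings match immediately: $|S|=|S_K|+|S_H|$, and $|E(G)|-|S|$ splits the same way. For the first grading I invoke Lemma~\ref{lem:components} with $\pi=\pi_K(S_K)$ and $\sigma=\pi_H(S_H)$. It says that $\omega_G(S_K\cup S_H)$ equals exactly the $t$-degree displayed above. So each line on the left maps isomorphically to the corresponding line on the right, and summing these isomorphisms gives \eqref{eq:catsplit}.

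The only step needing care is bookkeeping rather than a real obstacle: each pair must land in exactly one $(\pi,\sigma)$ summand, which holds because the boundary partitions are functions of the states. I would also note that $G$ is the union of $K$ and $H$ as graphs, so $V(G)=V(K)\cup V(H)$ with intersection $U$. This is what makes Lemma~\ref{lem:components} applicable: vertices of $K$ outside $U$ are never identified with vertices of $H$. As a sanity check, taking Hilbert series recovers Negami's splitting sum $\sum_{\pi,\sigma}f_\pi(K)\,t^{\rhoj(\pi,\sigma)}f_\sigma(H)$.
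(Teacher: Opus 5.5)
Your proposal is correct and follows essentially the same route as the paper. You use the bijection $S\mapsto(S\cap E(K),S\cap E(H))$ between states of $G$ and pairs of states, the additivity of the second and third gradings under tensor product, and Lemma~\ref{lem:components} for the first grading; you just spell out the distribution of tensor products over direct sums and the regrouping by boundary partitions, which the paper leaves implicit.
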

\begin{proof}
A state of $G$ is uniquely a pair $(S_K,S_H)$. The second and third gradings add under tensor product, and Lemma \ref{lem:components} gives exactly the first grading. Thus the one-dimensional homogeneous summands on the two sides correspond bijectively degree by degree.
\end{proof}

Writing $F_{K,\pi}=\Hilb(\cN_{U,\pi}(K))$, decategorification gives
\begin{equation}\label{eq:partialsplit}
f(K\cup_UH)=\sum_{\pi,\sigma\in\Part(U)}t^{\rhoj(\pi,\sigma)}F_{K,\pi}F_{H,\sigma}.
\end{equation}

\section{Recovery of Negami's splitting matrix}
For $\alpha\in\Part(U)$, let $K/\alpha$ be obtained by identifying all boundary vertices in each block of $\alpha$.

\begin{lemma}[Boundary quotient identity]\label{lem:quotient}
For every $\alpha\in\Part(U)$,
\begin{equation}\label{eq:quotient}
\boxed{f(K/\alpha)=\sum_{\pi\in\Part(U)}t^{\rhoj(\alpha,\pi)}F_{K,\pi}.}
\end{equation}
\end{lemma}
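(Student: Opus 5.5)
Lemma \ref{lem:quotient} follows from the state-sum formula (Theorem \ref{thm:statesum}) applied to $K/\alpha$, together with a component count parallel to Lemma \ref{lem:components}. Identifying boundary vertices does not create or destroy edges, so $E(K/\alpha)=E(K)$ canonically; a loop may be created when an edge joins two boundary vertices in the same block of $\alpha$, but it remains an edge. Hence states of $K/\alpha$ are exactly the subsets $S\subseteq E(K)$, with the same values of $|S|$ and $|E|-|S|$. It therefore suffices to prove, for every $S\subseteq E(K)$ with $\pi=\pi_K(S)$,
\[
\omega_{K/\alpha}(S)=\bigl(\omega_K(S)-|\pi|\bigr)+\rhoj(\alpha,\pi).
\]
We then group the states of $K/\alpha$ by the value of $\pi_K(S)$. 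By definition \eqref{eq:boundaryobject}, the sum of $t^{\omega_K(S)-|\pi|}x^{|S|}y^{|E(K)|-|S|}$ over the states with $\pi_K(S)=\pi$ is exactly $F_{K,\pi}$. This gives \eqref{eq:quotient}.

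To prove the component identity, we reuse the argument of Lemma \ref{lem:components}. Components of $(V(K),S)$ that avoid $U$ are untouched by the quotient, and there are $\omega_K(S)-|\pi|$ of them. The $|\pi|$ components meeting $U$ correspond bijectively to the blocks of $\pi$. In the quotient, two such components merge exactly when they are linked by a chain of relations, each of which is either ``same block of $\pi$'' (connection via $S$) or ``same block of $\alpha$'' (vertex identification). So the surviving boundary components correspond to the blocks of the equivalence relation generated by $\pi$ and $\alpha$, and their number is $\rhoj(\alpha,\pi)$.

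An alternative route avoids redoing the count. Realize $K/\alpha$ as $K\cup_U H_\alpha$, where $H_\alpha$ has vertex set $U$ and, for each block of $\alpha$, a spanning tree on that block. Then $f(K/\alpha)$ differs from $f(K\cup_U H_\alpha)$ only through the extra tree edges, so the cleaner approach is the direct argument above. I will use the direct argument.

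The main obstacle is rigor in the merging step. One must check that a component of $K$ containing boundary vertices from several $\alpha$-blocks is glued consistently, and that the quotient map on vertices induces precisely the join of the two equivalence relations on $U$-meeting components. I would handle this by working with the equivalence relation on $V(K)$ generated by the $S$-edges together with the $\alpha$-identifications. Its classes are the components of $(V(K/\alpha),S)$. Restricting to $U$ gives the relation generated by $\pi$ and $\alpha$, and classes disjoint from $U$ are the internal components.
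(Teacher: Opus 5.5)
Your proposal is correct and is essentially the paper's proof: you identify states of $K/\alpha$ with subsets of $E(K)$, prove $\omega_{K/\alpha}(S)=(\omega_K(S)-|\pi|)+\rho(\alpha,\pi)$ by keeping the internal components and merging the boundary-meeting ones according to the relation generated by $\pi$ and $\alpha$, then group by boundary type; your equivalence relation on $V(K)$ generated by the $S$-edges and the $\alpha$-identifications just makes the paper's one-line merging step rigorous. The aside you discard is misphrased ($K/\alpha$ is not $K\cup_U H_\alpha$; it is obtained from it by contracting the tree edges), but nothing in your argument depends on it.
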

\begin{proof}
Fix a state of boundary type $\pi$. It has $\omega_K(S)-|\pi|$ internal components. After the identifications prescribed by $\alpha$, its boundary-meeting components become $\rhoj(\alpha,\pi)$ components. Edge numbers are unchanged. Summing over all states of type $\pi$, then over $\pi$, gives \eqref{eq:quotient}.
\end{proof}

Order $\Part(U)=\{\pi_1,\ldots,\pi_m\}$ and define
\[
T_n(t)=\bigl(t^{\rhoj(\pi_i,\pi_j)}\bigr)_{i,j},\qquad
\mathbf F_K=(F_{K,\pi_i})_i,\qquad
\mathbf q_K=(f(K/\pi_i))_i.
\]
Lemma \ref{lem:quotient} says $\mathbf q_K=T_n(t)\mathbf F_K$.

\begin{theorem}[Negami splitting formula]\label{thm:negamisplit}
If $T_n(t)$ is invertible, then
\begin{equation}\label{eq:negamisplit}
\boxed{f(K\cup_UH)=\mathbf q_K^{\mathsf T}T_n(t)^{-1}\mathbf q_H.}
\end{equation}
\end{theorem}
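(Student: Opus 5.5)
The plan is to write the decategorified splitting identity \eqref{eq:partialsplit} as a bilinear form and then eliminate the boundary-type vectors with Lemma \ref{lem:quotient}. The first step is to observe that \eqref{eq:partialsplit}, obtained by taking $\Hilb$ of both sides of Theorem \ref{thm:catsplit}, reads
\[
f(K\cup_UH)=\sum_{i,j}F_{K,\pi_i}\,t^{\rhoj(\pi_i,\pi_j)}\,F_{H,\pi_j}=\mathbf F_K^{\mathsf T}\,T_n(t)\,\mathbf F_H .
\]
Here I use that $\Hilb$ is additive on direct sums and multiplicative on tensor products, and that $\Hilb(\kk\{\rhoj,0,0\})=t^{\rhoj}$. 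No further work is needed for this step; it is simply a rewriting.

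The second step applies Lemma \ref{lem:quotient} to both sides of the gluing, once with $K$ and once with $H$, which has the same boundary $U$. This gives $\mathbf q_K=T_n(t)\mathbf F_K$ and $\mathbf q_H=T_n(t)\mathbf F_H$. Under the invertibility hypothesis, the entries of $T_n(t)^{-1}$ lie in the field $\mathbb Q(t)$ of rational functions, and we can solve $\mathbf F_K=T_n(t)^{-1}\mathbf q_K$ and $\mathbf F_H=T_n(t)^{-1}\mathbf q_H$ over $\mathbb Q(t)[x,y]$. Substituting these gives
\[
f(K\cup_UH)=\mathbf q_K^{\mathsf T}\bigl(T_n(t)^{-1}\bigr)^{\mathsf T}\,T_n(t)\,T_n(t)^{-1}\mathbf q_H=\mathbf q_K^{\mathsf T}\bigl(T_n(t)^{\mathsf T}\bigr)^{-1}\mathbf q_H .
\]

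The only remaining point, which I expect to be the main obstacle, is symmetry. I need $T_n(t)^{\mathsf T}=T_n(t)$, which follows because $\rhoj(\pi,\sigma)$ counts the blocks of the equivalence relation generated by both partitions. That relation is symmetric in $\pi$ and $\sigma$, since $\pi\vee\sigma=\sigma\vee\pi$. With symmetry in hand, $(T_n(t)^{\mathsf T})^{-1}=T_n(t)^{-1}$, and \eqref{eq:negamisplit} follows.

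I would add one remark. The identity holds in the ring $\mathbb Q(t)[x,y]$, and the left-hand side is a polynomial. This is consistent with the equality, but I would not claim that the entries of $T_n(t)^{-1}$ themselves are polynomial.
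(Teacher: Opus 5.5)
Your proposal is correct and follows the paper's proof: you rewrite \eqref{eq:partialsplit} as $\mathbf F_K^{\mathsf T}T_n(t)\mathbf F_H$, apply Lemma~\ref{lem:quotient} to both $K$ and $H$ to get $\mathbf q=T_n(t)\mathbf F$, and substitute the inverted relations using the symmetry of $T_n(t)$. Your explicit justification of symmetry via $\pi\vee\sigma=\sigma\vee\pi$ and your remark that the identity lives in $\mathbb Q(t)[x,y]$ spell out points the paper leaves implicit.
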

\begin{proof}
Equation \eqref{eq:partialsplit} is $f(K\cup_UH)=\mathbf F_K^{\mathsf T}T_n\mathbf F_H$. Since $T_n$ is symmetric and invertible, substitute $\mathbf F_K=T_n^{-1}\mathbf q_K$ and $\mathbf F_H=T_n^{-1}\mathbf q_H$.
\end{proof}

\section{The complete two-vertex calculation}
Let $U=\{u_1,u_2\}$. There are two partitions:
\[C=\{\{u_1,u_2\}\},\qquad D=\{\{u_1\},\{u_2\}\}.\]

\begin{proposition}
In the order $(C,D)$,
\begin{equation}\label{eq:T2}
\boxed{T_2(t)=\begin{pmatrix}t&t\\t&t^2\end{pmatrix}.}
\end{equation}
\end{proposition}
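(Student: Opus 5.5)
The plan is to compute the four entries $t^{\rhoj(\pi_i,\pi_j)}$ of $T_2(t)$ directly from the definition of $\rhoj$. By definition, $\rhoj(\pi,\sigma)$ is the number of blocks of the smallest equivalence relation on $U=\{u_1,u_2\}$ that contains the relations of both $\pi$ and $\sigma$. With $U$ listed in the order $(C,D)$, the matrix is
\[
T_2(t)=\begin{pmatrix}t^{\rhoj(C,C)}&t^{\rhoj(C,D)}\\ t^{\rhoj(D,C)}&t^{\rhoj(D,D)}\end{pmatrix},
\]
so all that is needed is four block counts.

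The join is easy to describe on a two-element set. The partition $D$ is the discrete relation, which contains only the diagonal pairs, and $C$ is the total relation. Any equivalence relation containing $C$ is therefore total, which gives $\rhoj(C,C)=\rhoj(C,D)=\rhoj(D,C)=1$. The smallest equivalence relation containing $D$ together with itself is just $D$, which has two blocks, so $\rhoj(D,D)=2$.

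Substituting these values gives exactly the boxed matrix \eqref{eq:T2}. Its symmetry is consistent with the symmetry of $\rhoj$ used in Theorem~\ref{thm:negamisplit}.

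There is no real obstacle here. The only point needing care is the order convention. We must use the generated equivalence relation, which is the join in the refinement order, and not Negami's opposite-order notation. This is what makes the $(D,D)$ entry $t^2$ rather than $t$.

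As a sanity check, I would confirm the $(D,D)$ entry with Lemma~\ref{lem:quotient}. Take $K$ to be two isolated boundary vertices and $\alpha=D$. Then $K/D=K$ and $f=t^2$, which matches $t^{\rhoj(D,D)}F_{K,D}$ with $F_{K,D}=1$.
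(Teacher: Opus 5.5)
Your proposal is correct and matches the paper's proof: both evaluate the four block counts directly from the definition of the generated equivalence relation, giving $\rho(C,C)=\rho(C,D)=\rho(D,C)=1$ and $\rho(D,D)=2$. One correction to your side remark: $D\vee D=D\wedge D=D$, so the $(D,D)$ entry is $t^2$ under either order convention; the convention actually affects the off-diagonal entries, which would become $t^2$ if one took the common refinement instead of the generated relation.
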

\begin{proof}
Combining $C$ with either partition yields one generated boundary block, while combining $D$ with itself leaves two blocks.
\end{proof}

\begin{proposition}[Explicit inverse]\label{prop:B2}
For $t\neq0,1$,
\begin{equation}\label{eq:B2}
\boxed{B_2(t)=T_2(t)^{-1}=\frac1{t^2(t-1)}\begin{pmatrix}t^2&-t\\-t&t\end{pmatrix}.}
\end{equation}
Equivalently,
\[
B_2(t)=\begin{pmatrix}\frac1{t-1}&-\frac1{t(t-1)}\\[1mm]-\frac1{t(t-1)}&\frac1{t(t-1)}\end{pmatrix}.
\]
\end{proposition}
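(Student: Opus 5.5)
The plan is a direct $2\times 2$ inversion of the matrix $T_2(t)$ from the preceding proposition. Write $T_2(t)=\begin{pmatrix}a&b\\c&d\end{pmatrix}$ with $a=b=c=t$ and $d=t^2$. First compute the determinant:
\[
\det T_2(t)=t\cdot t^2-t\cdot t=t^2(t-1).
\]
This vanishes exactly when $t=0$ or $t=1$, which explains the hypothesis $t\neq0,1$. Under that hypothesis $T_2(t)$ is invertible over the field of rational functions, or at any admissible specialization of $t$.

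Next apply the adjugate formula
\[
T^{-1}=\frac{1}{ad-bc}\begin{pmatrix}d&-b\\-c&a\end{pmatrix}.
\]
This gives $\frac{1}{t^2(t-1)}\begin{pmatrix}t^2&-t\\-t&t\end{pmatrix}$, which is the first boxed form \eqref{eq:B2}. For the second form, divide each entry by $t^2(t-1)$:
\begin{itemize}
\item $t^2/(t^2(t-1))=1/(t-1)$;
\item $-t/(t^2(t-1))=-1/(t(t-1))$, for both off-diagonal entries;
\item $t/(t^2(t-1))=1/(t(t-1))$.
\end{itemize}

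As a sanity check, multiply $T_2(t)$ by the second form and confirm the product is the identity. Row $C$ against column $C$ gives
\[
\frac{t}{t-1}-\frac{t}{t(t-1)}=\frac{t-1}{t-1}=1.
\]
Row $D$ against column $D$ gives
\[
-\frac{t}{t(t-1)}+\frac{t^2}{t(t-1)}=\frac{t-1}{t-1}=1.
\]
The off-diagonal products vanish by the same cancellations.

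There is no real obstacle here. The only point needing care is to keep the $(C,D)$ ordering consistent with the preceding proposition, so that the entry $t^2$ of $T_2$ sits in the $DD$ position and the entry $t^2$ of the adjugate sits in the $CC$ position.
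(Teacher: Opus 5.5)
Your proposal is correct and is the same argument as the paper's: compute $\det T_2(t)=t^2(t-1)$ and apply the $2\times 2$ adjugate formula. Your entrywise simplification to the second form and your check that $T_2(t)B_2(t)=I$ are both accurate; the paper does not write them out.
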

\begin{proof}
$\det T_2=t^3-t^2=t^2(t-1)$, so the ordinary $2\times2$ inverse formula gives \eqref{eq:B2}.
\end{proof}

\begin{corollary}[Two-vertex splitting]
If $K$ and $H$ meet only in $u_1,u_2$, then
\begin{align}
f(K\cup_UH)={}&\frac{f(K/C)f(H/C)}{t-1}
-\frac{f(K/C)f(H)}{t(t-1)}\\
&-\frac{f(K)f(H/C)}{t(t-1)}
+\frac{f(K)f(H)}{t(t-1)}.\label{eq:2split}
\end{align}
\end{corollary}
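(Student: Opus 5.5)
The plan is to specialize Theorem~\ref{thm:negamisplit} to $n=2$ with the boundary partitions ordered as $(C,D)$. First I will identify the quotient vector $\mathbf q_K=(f(K/C),f(K/D))^{\mathsf T}$ in graph-theoretic terms. Here $K/C$ is the graph obtained by identifying $u_1$ and $u_2$. For $K/D$, every block of $D$ is a singleton, so nothing is identified and $K/D=K$. Hence $\mathbf q_K=(f(K/C),f(K))^{\mathsf T}$, and likewise $\mathbf q_H=(f(H/C),f(H))^{\mathsf T}$.

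Next I need the hypothesis of Theorem~\ref{thm:negamisplit}. For $t\neq 0,1$ the matrix $T_2(t)$ is invertible, since $\det T_2=t^2(t-1)$ as computed in Proposition~\ref{prop:B2}. I will read the identity either in this range of $t$ or as an identity of rational functions in $t$, which is how the denominators $t-1$ and $t(t-1)$ should be interpreted. One check is needed: the graph $G=K\cup_UH$ must satisfy the hypotheses of Lemma~\ref{lem:components}. It does, because $K$ and $H$ meet only in $\{u_1,u_2\}$ and have disjoint edge sets.

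The main step is the bilinear expansion $\mathbf q_K^{\mathsf T}B_2(t)\mathbf q_H=\sum_{i,j}(\mathbf q_K)_i(B_2)_{ij}(\mathbf q_H)_j$, using the second displayed form of $B_2(t)$ in Proposition~\ref{prop:B2}. The entries give the four terms of \eqref{eq:2split}:
\begin{itemize}
\item the $(C,C)$ entry $\frac1{t-1}$ gives $\frac{f(K/C)f(H/C)}{t-1}$;
\item the $(C,D)$ and $(D,C)$ entries, both $-\frac1{t(t-1)}$, give the two mixed terms $-\frac{f(K/C)f(H)}{t(t-1)}$ and $-\frac{f(K)f(H/C)}{t(t-1)}$;
\item the $(D,D)$ entry $\frac1{t(t-1)}$ gives $\frac{f(K)f(H)}{t(t-1)}$.
\end{itemize}
Summing these four terms reproduces \eqref{eq:2split} exactly.

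There is no real obstacle here. The only delicate points are bookkeeping. One must note that $K/D=K$. One must also match the indexing order $(C,D)$ consistently between $T_2$, $B_2$ and $\mathbf q$. Symmetry of $B_2$ makes the transpose convention harmless.
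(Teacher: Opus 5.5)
Your proposal is correct and takes the route the paper intends. The paper states the corollary without a separate proof, as the $n=2$ case of Theorem~\ref{thm:negamisplit}: it uses $\mathbf q_K=(f(K/C),f(K))^{\mathsf T}$, because $K/D=K$, together with the entries of $B_2(t)$ from Proposition~\ref{prop:B2}, and your handling of the $(C,D)$ ordering, of invertibility for $t\neq0,1$ (or over rational functions), and of the gluing hypotheses is all sound.
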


\begin{example}[Verification on $C_4$]\label{ex:C4}
Let $K$ be the path $u_1-a-u_2$ and $H$ the path $u_1-b-u_2$. Their union is the four-cycle $C_4$. Directly from the state sum,
\[
f(K)=t^3y^2+2t^2xy+tx^2.
\]
After identifying $u_1$ and $u_2$, the graph $K/C$ has two vertices joined by two parallel edges; hence
\[
f(K/C)=t^2y^2+2txy+tx^2.
\]
The same formulas hold for $H$. Substitution into \eqref{eq:2split} simplifies to
\begin{equation}\label{eq:C4}
\boxed{f(C_4)=t^4y^4+4t^3xy^3+6t^2x^2y^2+4tx^3y+tx^4.}
\end{equation}
This agrees with a direct count: the $0,1,2,3,4$-edge states occur with multiplicities $1,4,6,4,1$, and every proper spanning subgraph of $C_4$ is a forest. Thus the two-vertex splitting formula is verified in this nontrivial example.
\end{example}

\section{The splitting matrix as a categorical Gram matrix}
For $\pi,\sigma\in\Part(U)$ define the one-dimensional pairing object
\[
\Theta_{\pi\sigma}=\kk\{\rhoj(\pi,\sigma),0,0\}.
\]
Then $\Hilb(\Theta_{\pi\sigma})=t^{\rhoj(\pi,\sigma)}$.

\begin{proposition}
The matrix $\Theta_n=(\Theta_{\pi\sigma})$ decategorifies entrywise to $T_n(t)$. Moreover, Theorem \ref{thm:catsplit} is the categorical bilinear pairing
\[
\cN_0(K\cup_UH)\cong\mathbf{\cN}_K^{\mathsf T}\Theta_n\mathbf{\cN}_H,
\]
where matrix multiplication means tensor product in each entry followed by direct sum.
\end{proposition}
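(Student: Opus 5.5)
The plan is to prove the two assertions separately, since both are essentially formal unpackings of definitions already in place.

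For the first assertion, I will compute the Hilbert polynomial of each entry $\Theta_{\pi\sigma}=\kk\{\rhoj(\pi,\sigma),0,0\}$. This is a single copy of $\kk$ placed in degree $(\rhoj(\pi,\sigma),0,0)$, so its Hilbert polynomial is the monomial $t^{\rhoj(\pi,\sigma)}x^0y^0=t^{\rhoj(\pi,\sigma)}$. With the same ordering $\pi_1,\ldots,\pi_m$ of $\Part(U)$ used for $T_n(t)$, this is precisely the $(i,j)$ entry of $T_n(t)$.

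For the second assertion, I will first make the notation precise. Define $\mathbf{\cN}_K=(\cN_{U,\pi_i}(K))_i$ and $\mathbf{\cN}_H=(\cN_{U,\pi_j}(H))_j$ as column vectors of graded objects. By the stated rule, entrywise products are tensor products and sums are direct sums. The expression $\mathbf{\cN}_K^{\mathsf T}\Theta_n\mathbf{\cN}_H$ then unfolds to
\[
\bigoplus_{i}\cN_{U,\pi_i}(K)\otimes\Bigl(\bigoplus_j\Theta_{\pi_i\pi_j}\otimes\cN_{U,\pi_j}(H)\Bigr).
\]
Using distributivity of $\otimes$ over finite $\oplus$, together with associativity of $\otimes$ (both canonical natural isomorphisms in $\mathrm{GrVect}^{\mathbb Z^3}_{\kk}$), this is isomorphic to
\[
\bigoplus_{\pi,\sigma}\cN_{U,\pi}(K)\otimes\kk\{\rhoj(\pi,\sigma),0,0\}\otimes\cN_{U,\sigma}(H).
\]
By Theorem \ref{thm:catsplit}, this is isomorphic to $\cN_0(K\cup_UH)$.

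The only point needing care is that matrix multiplication of objects is well defined only up to isomorphism. The bracketing and the order of summation both change the result, but only by canonical isomorphisms, which is harmless because the claim is an isomorphism of graded objects. I will also note that applying $\Hilb$, which is additive on $\oplus$ and multiplicative on $\otimes$, recovers \eqref{eq:partialsplit}, namely $\mathbf F_K^{\mathsf T}T_n\mathbf F_H$. This confirms that the categorical pairing decategorifies consistently. I do not expect any genuine obstacle; the main work is stating the matrix convention unambiguously.
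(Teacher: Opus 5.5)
Your proposal is correct and follows the same route as the paper. The entrywise Hilbert-polynomial computation gives $T_n(t)$, and expanding the categorical matrix product (via distributivity and associativity of $\otimes$ over $\oplus$) yields exactly the direct sum in \eqref{eq:catsplit}, so Theorem \ref{thm:catsplit} finishes the argument; your explicit definition of $\mathbf{\cN}_K$ and the remark on canonical isomorphisms just make precise what the paper leaves implicit.
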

\begin{proof}
The first statement follows from the Hilbert polynomial of each $\Theta_{\pi\sigma}$. Expanding the categorical matrix product gives exactly the direct sum in \eqref{eq:catsplit}.
\end{proof}

Thus $T_n(t)$ is naturally interpreted as a Gram-type matrix for boundary connectivity states. Negami's inverse matrix $B_n(t)$ should accordingly be interpreted as an inverse pairing kernel after passage to an enlarged categorical setting.

\section{Why the inverse requires an enlarged category}
\begin{proposition}[Finite-dimensional obstruction]\label{prop:obstruction}
The entries $1/(t-1)$ and $-1/[t(t-1)]$ of $B_2(t)$ cannot be Hilbert polynomials of finite-dimensional $\mathbb Z$-graded vector spaces.
\end{proposition}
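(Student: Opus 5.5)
The plan is to show that the Hilbert series of a finite-dimensional $\mathbb Z$-graded vector space is a Laurent polynomial with nonnegative integer coefficients. We then check that neither entry is a Laurent polynomial in $t$, and that the second entry fails the sign condition as well.

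First, if $M=\bigoplus_i M_i$ is finite-dimensional, only finitely many $M_i$ are nonzero. Hence $\Hilb(M)=\sum_i \dim_{\kk} M_i\,t^i$ lies in $\mathbb Z_{\ge0}[t,t^{-1}]$. We must read the rational functions $1/(t-1)$ and $-1/[t(t-1)]$ as elements of the field $\mathbb Q(t)$. The claim then is that neither lies in the subring $\mathbb Z[t,t^{-1}]$, let alone in the nonnegative cone.

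Next, suppose $1/(t-1)=p(t)$ with $p\in\mathbb Z[t,t^{-1}]$. Multiply by a power $t^N$ that clears negative exponents, so that $t^N=(t-1)q(t)$ with $q\in\mathbb Z[t]$. Evaluating at $t=1$ gives $1=0$, a contradiction. The same argument applies to $-1/[t(t-1)]$, because $t(t-1)$ also vanishes at $t=1$ while $t^N$ does not. This handles both entries simultaneously and does not even need positivity.

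The main subtlety I expect is the interpretation. In the world of formal power series, $1/(t-1)=-\sum_{k\ge0}t^k$ does make sense, and $1/(1-t)$ is the Hilbert series of an infinite-dimensional space like $\kk[t]$. So the proof should state explicitly that finite-dimensionality forces a Laurent polynomial. It is also worth remarking that every coefficient of the expansion $-\sum_{k\ge0}t^k$ is negative. This shows that even dropping finiteness, no graded vector space realizes $1/(t-1)$ in that expansion. This motivates the enlarged (graded super or derived, localized) setting discussed afterwards.
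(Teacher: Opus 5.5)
Your proof is correct and follows the same route as the paper: finite-dimensionality forces $\Hilb(M)\in\mathbb Z_{\ge 0}[t,t^{-1}]$, and neither entry lies in $\mathbb Z[t,t^{-1}]$. Your clearing-of-denominators step ($t^N=(t-1)q(t)$, then evaluate at $t=1$) makes explicit the non-Laurent claim that the paper only asserts, and you correctly note that the sign condition is not needed.
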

\begin{proof}
The Hilbert polynomial of a finite-dimensional graded vector space is a Laurent polynomial with nonnegative integer coefficients and finite support. The displayed rational functions are not Laurent polynomials; one also carries a negative sign. Hence neither can occur as such a Hilbert polynomial.
\end{proof}

\begin{remark}[Homological shifts handle signs]
In the Grothendieck group of a triangulated or dg setting, $[X[1]]=-[X]$. Thus negative coefficients can be encoded by homological shift. The essential additional difficulty is the denominator, especially $(t-1)^{-1}$.
\end{remark}

\begin{proposition}[Formal realization after completion]
In a $t$-adically completed setting allowing locally finite infinite sums,
\[
\frac1{t-1}=-\frac1{1-t}=-(1+t+t^2+\cdots).
\]
Consequently the rational factor $1/(t-1)$ admits a formal Euler-characteristic realization by an infinite graded object together with a homological shift.
\end{proposition}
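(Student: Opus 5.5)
The argument has two parts: a formal power-series identity, and an explicit graded object that realizes it. Work in the ring $\mathbb Z[[t]]$ of formal power series; in the Laurent setting of the paper, use $\mathbb Z((t))$. In this ring $1-t$ is a unit, because its constant term is $1$. The geometric series $\sum_{k\ge0}t^k$ is its inverse, since the telescoping product $(1-t)\sum_{k=0}^{N}t^k=1-t^{N+1}$ converges $t$-adically to $1$. Multiplying by $-1$ then gives the displayed identity $\frac1{t-1}=-\frac1{1-t}=-(1+t+t^2+\cdots)$ as an equality in the completed ring. This is the identity the statement refers to.

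For the categorical realization, let $\mathrm{GrVect}^{\mathrm{lf}}_{\kk}$ be the category of $\mathbb Z$-graded vector spaces with finite-dimensional homogeneous pieces, vanishing in sufficiently negative degrees. Its Hilbert series lands in $\mathbb Z((t))$, which is exactly the completed setting allowing locally finite infinite sums. Take
\[
P=\bigoplus_{k\ge0}\kk\{k\}\cong \kk[s],\qquad \deg s=1,
\]
so that $\Hilb(P)=\sum_{k\ge0}t^k=(1-t)^{-1}$. To produce the sign, pass to complexes, or to a dg enhancement, of such objects. Define the graded Euler characteristic $\chi(X)=\sum_i(-1)^i\Hilb(H^i X)$. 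Then $\chi(X[1])=-\chi(X)$, in line with the preceding remark on homological shifts, and hence $\chi(P[1])=-(1+t+t^2+\cdots)=\frac1{t-1}$.

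Two further points complete the argument. First, $\chi$ is additive and multiplicative: direct sums go to sums, and tensor products go to products. For tensor products this uses the local finiteness together with the lower bound on degrees, which keeps each graded piece of a tensor product finite-dimensional. Second, the other entry needs the same treatment. Since $-\frac1{t(t-1)}=t^{-1}(1+t+t^2+\cdots)$, it is realized by $P\{-1\}$, which has no homological shift; the two signs cancel. This shows the construction covers all the entries of $B_2(t)$ flagged in Proposition \ref{prop:obstruction}.

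The main obstacle is being precise about which completion is meant, so that infinite sums are legitimate. One must check that objects bounded below with finite-dimensional graded pieces are closed under tensor product, so that $\chi$ is a ring homomorphism into $\mathbb Z((t))$. Without the lower bound, products of series such as $\sum_{k\in\mathbb Z}t^k$ would not converge. Once that bookkeeping is in place, the identity itself is elementary.
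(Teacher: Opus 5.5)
Your proof is correct and takes the same approach as the paper: the geometric-series identity in $\mathbb Z[[t]]$, with the overall minus sign supplied by a homological shift. You are more explicit than the paper's one-line argument, since you name the object $P[1]$ with $P\cong\kk[s]$ and check that the bounded-below, locally finite setting makes $\chi$ well defined and multiplicative (for complexes you should also restrict to, e.g., bounded ones so that the alternating sum over cohomological degrees is finite).
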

\begin{proof}
This is the geometric-series identity in $\mathbb Z[[t]]$; the overall minus sign is represented in a triangulated Grothendieck group by a shift. The statement is formal and does not produce a finite object.
\end{proof}

For general $n$, the known determinant formula for Negami's connectivity matrix implies singularities at $t=0,1,\ldots,n-1$; see \cite{Burgos2018}. At such parameters one should expect the categorical pairing itself to become degenerate, suggesting radicals, negligible morphisms, quotient categories, or generalized inverse data rather than an ordinary inverse.

\section{Toward a homological Negami theory}
Equation \eqref{eq:tutte} makes clear that existing Tutte categorifications are relevant. Jasso-Hernandez and Rong construct a bigraded chain complex and exact sequences categorifying a version of deletion--contraction \cite{JassoRong2006}. Regrading and specialization therefore provide homological information associated with the Negami polynomial through its Tutte equivalence. The construction in the present paper is different in emphasis: it keeps all three variables visible and isolates the boundary-partition mechanism of Negami's splitting formula.

A natural next target is a category $\mathcal P_U(t)$ encoding boundary partitions, together with modules $\mathcal M_U(K)$ for boundary graphs, such that gluing is represented by a derived tensor product.

\begin{conjecture}[Derived Negami gluing]\label{conj:derived}
There exists a suitable dg or derived partition-type category $\mathcal P_U(t)$ and an assignment $K\mapsto\mathcal M_U(K)$ for graphs with boundary $U$ such that
\begin{equation}\label{eq:derived}
\boxed{\mathcal M_U(K\cup_UH)\simeq\mathcal M_U(K)\overset{\mathbf L}{\otimes}_{\mathcal P_U(t)}\mathcal M_U(H),}
\end{equation}
and the induced Euler pairing on the Grothendieck group has Gram matrix $T_n(t)$.
\end{conjecture}

For $n=2$, the first concrete problem is to construct, in an appropriate localization or completion, an inverse kernel whose Grothendieck matrix is
\[
\frac1{t^2(t-1)}\begin{pmatrix}t^2&-t\\-t&t\end{pmatrix}.
\]
Such a construction would categorify not merely the pre-inversion boundary pairing but the actual two-vertex splitting coefficients.

\section{Status of the results and conclusion}
The state-sum lift, deletion--contraction decomposition, boundary-state decomposition, object-level gluing theorem, Gram-matrix interpretation, and the complete $n=2$ calculation are proved in this paper. The proposed derived inverse kernel and a nontrivial Negami homology are not proved here; they are research problems motivated by the preceding structure.

The main point is that the Negami polynomial admits a direct additive categorification in which its special splitting mechanism remains visible before decategorification. Boundary connectivity is represented by partition-indexed graded objects, and $T_n(t)$ becomes a categorical pairing matrix. The obstruction in Proposition \ref{prop:obstruction} then explains why the inverse matrix naturally points toward derived localization or completion. This provides a precise starting point for a genuinely homological theory of Negami splitting.

\section*{Acknowledgements}
The author is grateful to the literature on Negami's polynomial and graph-polynomial categorification that motivated the present formulation.

\end{document}